\documentclass[11pt]{amsart}
\usepackage{texmaX,semmaX,semtkzX}
\usepackage{makecell}
\usepackage{multirow}
\newcommand{\Mod}[1]{\ \mathrm{mod}\ #1}

\begin{document}

\title[On the order of the Tate--Shafarevich group modulo squares]{A note on the order of the Tate--Shafarevich group modulo squares}
\author{Alexandros Konstantinou}
\address{University College London, London WC1H 0AY, UK}
\email{alexandros.konstantinou.16@ucl.ac.uk}
\subjclass[2010]{11G10 (11G40)}
\begin{abstract}
We investigate the order of the Tate--Shafarevich group of abelian varieties modulo rational squares. Our main result shows that every square-free natural number appears as the non square-free part of the Tate--Shafarevich group of some abelian variety, thereby validating a conjecture of W. Stein. 
\end{abstract}
\maketitle
\setcounter{tocdepth}{1}
\tableofcontents
\section{Introduction}
In the following note, we study the order of the Tate--Shafarevich group $\Sha(A/K)$ of an abelian variety $A$ defined over a number field $K$. Its non-zero elements correspond to principal homogeneous spaces for $A/K$ with a rational point over all completions of $K$ though lacking a point in $K$ itself. Despite its central role in the study of rational points, several foundational questions regarding $\Sha(A/K)$ remain open. These include the Tate--Shafarevich conjecture which stipulates its finiteness. In addition, a conjecture of Stein focuses on the possible values that the order of the Tate--Shafarevich group can take (if finite) modulo rational squares. In the following note, we validate Stein's conjecture, which can be phrased as follows.
\begin{theorem}[=Corollary \ref{Sha-n-square}]
For every positive square-free integer $n$, there exists an abelian variety $A/\mathbb{Q}$ with finite Tate--Shafarevich group of order $nm^2$ for some integer $m \geq 1$.
\end{theorem}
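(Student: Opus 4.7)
My plan is to reduce the statement to a prime-by-prime realization problem, and then, for each prime $p$, to produce an abelian variety whose Tate--Shafarevich group has odd $p$-adic valuation and even valuation at every other prime.

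First I would establish a multiplicative reduction. Suppose that for each prime $p$ we can find an abelian variety $A_p/\mathbb{Q}$ whose (finite) Sha has order $p\cdot m_p^{2}$ for some $m_p\ge 1$. Then, given a square-free $n=p_{1}\cdots p_{k}$, the product $A=A_{p_{1}}\times\cdots\times A_{p_{k}}$ satisfies
\[
\Sha(A/\mathbb{Q}) \;\cong\; \prod_{i=1}^{k}\Sha(A_{p_{i}}/\mathbb{Q}),
\]
so $|\Sha(A/\mathbb{Q})|=n\cdot m^{2}$ with $m=m_{p_{1}}\cdots m_{p_{k}}$. This turns the conjecture into a prime-by-prime existence problem.

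For a fixed prime $p$, the natural place to seek non-square Sha is among abelian varieties that are \emph{not} principally polarized over $\mathbb{Q}$, since for principally polarized $A$ the Cassels--Tate pairing forces Poonen--Stoll-type square/twice-square constraints on $|\Sha|$. I would therefore take $A_p$ to be a Weil restriction $\mathrm{Res}_{F/\mathbb{Q}}(E)$ of an elliptic curve $E/F$ over a carefully chosen number field $F$, and exploit the identification $\Sha(\mathrm{Res}_{F/\mathbb{Q}}(E)/\mathbb{Q})\cong\Sha(E/F)$. Arranging for $E/F$ to admit a rational $p$-isogeny, a $p$-isogeny descent on $E/F$ should be used to force the $p$-part of $\Sha(E/F)$ to have odd valuation; finiteness of the whole group can be guaranteed by insisting that $E$ have analytic rank zero over $F$, so that Kolyvagin-type results furnish a proved case of BSD and in particular finiteness of Sha.

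The main obstacle I anticipate is controlling the $\ell$-part of $\Sha$ for primes $\ell\neq p$: because the Weil restriction is generally not principally polarized, the Cassels--Tate pairing on its Sha need not be alternating, and spurious non-square contributions at primes other than $p$ are a genuine danger rather than an abstract worry. Handling this likely requires a judicious choice of $F$ (for instance, a cyclic extension of $p$-power degree, so that Galois-module parity arguments apply at all $\ell\neq p$) together with an explicit Selmer-group computation showing squareness away from $p$. Assembling these ingredients --- multiplicative reduction, Weil-restriction construction, $p$-isogeny descent for the odd part at $p$, and a parity/squareness argument for the primes $\ell\neq p$ --- should yield Stein's conjecture.
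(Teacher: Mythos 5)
Your multiplicative reduction (take products of abelian varieties realizing each prime factor of $n$, using $\Sha(A_1\times A_2)\cong\Sha(A_1)\times\Sha(A_2)$) is exactly how the paper deduces the corollary from its prime case, and the idea of passing through a Weil restriction along a cyclic extension is also in the right territory. But the core of your construction has a fatal flaw: you propose to take $A_p=\mathrm{Res}_{F/\mathbb{Q}}(E)$ for an elliptic curve $E/F$ and to force the $p$-part of $\Sha(E/F)\cong\Sha(A_p/\mathbb{Q})$ to have odd valuation by a $p$-isogeny descent. This is impossible. By Cassels' theorem the Cassels--Tate pairing on the Tate--Shafarevich group of an elliptic curve over \emph{any} number field is alternating, so if $\Sha(E/F)$ is finite its order is a perfect square; no choice of $F$, no rational $p$-isogeny, and no descent computation can produce odd $p$-valuation there. (Your worry about the Weil restriction failing to be principally polarized is also misplaced: the product polarization on $E^{[F:\mathbb{Q}]}$ descends to a principal polarization on $\mathrm{Res}_{F/\mathbb{Q}}E$, and in any case its $\Sha$ is literally $\Sha(E/F)$ by Shapiro's lemma. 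The danger at $\ell\neq p$ that you flag as the ``main obstacle'' is in fact the easy part; the hard part is getting non-squareness at $p$ at all.)

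The paper evades Cassels' theorem by working not with the Weil restriction $B=\mathrm{Res}_{K}^{F}E$ itself ($F/K$ cyclic of odd prime degree $p$) but with $R=\ker(\mathrm{Tr}\colon B\to E)$, a $(p-1)$-dimensional twist of $E^{p-1}$ that carries no principal polarization and so is not constrained to have square $\Sha$. The isogeny $\phi\colon R\times E\to B$ with $\ker(\phi)\cong E[p]$, fed into the Cassels--Tate isogeny-invariance formula over a quadratic field $L$ chosen so that all the local fudge factors become squares, yields $|\Sha_0(R/L)[p^{\infty}]|\equiv p^{\,\mathrm{rk}_p(E/L)}\Mod\square$; the squareness of $\Sha(E/L)$ and of $\Sha(B/L)\cong\Sha(E/L\otimes_K F)$ (both elliptic-curve Shas, via Poonen--Stoll) kills the contributions away from $p$. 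A root-number computation over an imaginary quadratic field forces $\mathrm{rk}_p(E/L)$ to be odd, finiteness comes from Kato and Rohrlich along the cyclotomic $\mathbb{Z}_p$-tower, and only then is a Weil restriction used --- applied to $R$, not to $E$ --- to bring the example down to $\mathbb{Q}$. The cases $p\le 7$ are quoted from Poonen--Stoll and Stein. Without replacing your elliptic curve by some non-principally-polarized object playing the role of $R$, your plan cannot be repaired.
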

\subsection{Previous work} In the case of principally polarised abelian varieties, the Cassels--Tate pairing, \[\langle,\rangle: \Sha(A/K) \times \Sha(A/K) \to \mathbb{Q}/\mathbb{Z},\] lays down significant constraints on the structure of the Tate--Shafarevich group. In particular, for elliptic curves, this is known to be alternating, and therefore the order of the Tate--Shafarevich group of an elliptic curve must be a square, if finite. 

The work of Flach \cite{Flach} establishes the (slightly weaker) skew symmetric condition $\langle x, y \rangle = - \langle y , x\rangle$ for principally polarised abelian varieties. From this, one can \textit{only} deduce that the Tate--Shafarevich group is either a square or twice a square. Building upon this, Poonen and Stoll present a criterion for deducing if the Tate--Shafarevich group (provided it's finite) is of square order purely based on the vanishing of $\langle c, c \rangle$ for an element $c \in \Sha(A/K)[2]$ canonically associated to $A$. They then use this criterion to present the first example of an abelian variety for which the order of its Tate--Shafarevich group is twice a square.

 Subsequently, Stein \cite{Stein} presents the first examples of abelian varieties defined over $\mathbb{Q}$ in which the order of their Tate--Shafarevich groups satisfy $\left |\Sha(A/\mathbb{Q}) \right | = p m^2$ for $p < 25000$ an odd prime. His construction relies on the existence of an abelian variety defined over $\mathbb{Q}$ which becomes isomorphic to a product $E^{p-1}$ of elliptic curves over a cyclic extension. 
Following this, Keil's work \cite{Keil} presents examples of abelian surfaces, given as quotients of a product of two elliptic curves, for which $\left | \Sha \right | = p m^2$ for $p \leq 13$ an odd prime.

\subsection{Acknowledgements} I would like to extend my thanks to Vladimir Dokchitser for his guidance and many constructive discussions during the course of this work. In addition, I would like to thank Adam Morgan for many helpful suggestions on a draft version of this paper and for many useful discussions that came before that. 

\section{Expression for $\Sha$ mod squares} \label{sec_sha_mod_squares}
Our proof relies on breaking down the Weil restriction of scalars of an abelian variety up to isogeny followed by an application of a formula of Cassels--Tate. This allows us to express the size of the Tate--Shafarevich group modulo squares in terms of Birch--Swinnerton-Dyer constants.

\subsection{Cassels--Tate formula} In the following subsection, we briefly recall a result of Cassels and Tate which proves invariance of the Birch--Swinnerton-Dyer conjecture under isogeny. We first fix some notation. 
\begin{notation} We let $K$ be a number field. Given a $K$-rational isogeny $\phi: A_1 \to A_2$, we write 
\begin{equation*}\begin{split} 
Q(\phi) =&|\textup{coker}(\phi: A_1(K)/A_1(K)_{\textup{tors}} \to A_2(K)/A_2(K)_{\textup{tors}})| \\ \times &|\textup{ker}(\phi: \Sha(A_1/K)_{\textup{div}} \to \Sha(A_2/K)_{\textup{div}})|, \end{split}\end{equation*}
where $\Sha_{\textup{div}}$ denotes the divisible part of $\Sha$. 

For an abelian variety $A/K$ with a fixed non-zero global exterior form $\omega$, we denote the Birch--Swinnerton-Dyer periods by \[\Omega_{\mathbb{C}}(A,\omega)=2^{\textup{dim}A} \int_{A(\mathbb{C})}  \left |\omega \wedge \overline{\omega} \right |, \ \ \  \ \ \Omega_{\mathbb{R}}(A,\omega)=\int_{A(\mathbb{R})}  \left | \omega\right |.\]
We write $\Omega(A/K,\omega) = \prod_{v \mid \infty} \Omega_{K_v}(A,\omega)$ where $K_v = \mathbb{R}$ (resp. $\mathbb{C}$) if $v$ is real (resp. complex). In addition, for a non-archimedean place $v$, we write $w_{A,v}^{\textup{min}}$ for a N\'{e}ron minimal exterior form on $A$, $c_v(A/K)$ for its Tamagawa number and $C(A/K,\omega)=\prod_{v \nmid \infty}c_v(A/K)|{\omega}/{\omega_{A,v}^{\textup{min}}} |_v $. We write $\Sha_0 = \Sha/\Sha_{\textup{div}}$ and $\square$ to denote the square of a rational number.

\end{notation}
The following theorem is a version of the Cassels--Tate formula as found in {\cite[Theorem 4.3]{BSD-mod-squares}}.
\begin{theorem}[\textup{Cassels--Tate}] \label{Cassels-tate}
Let $\phi:A_1 \to A_2$ be a $K$-rational isogeny. Then, for any non-zero global exterior forms $\omega_1$ and $\omega_2$ on $A_1$ and $A_2$,
\begingroup
\fontsize{9.5pt}{9pt}
\begin{equation*}  \frac{Q(\phi^{\vee})}{Q(\phi)}=\frac{|A_2(K)_{\textup{tors}} |}{|A_1(K)_{\textup{tors}} |}\frac{|A_2^{\vee}(K)_{\textup{tors}} |}{|A_1^{\vee}(K)_{\textup{tors}} |} \frac{C(A_1/K,\omega_1)}{C(A_2/K,\omega_2)} \frac{\Omega(A_1/K,\omega_1)}{\Omega(A_2/K,\omega_2)} \prod_{p \mid \textup{deg}(\phi)} \frac{ | \Sha_0(A_1/K)[p^{\infty}] |}{ | \Sha_0(A_2/K)[p^{\infty}] |}. \end{equation*}
\endgroup
\end{theorem}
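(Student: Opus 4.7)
The plan is to follow the classical Cassels--Tate strategy: express both sides via $\phi$-Selmer groups, apply global duality to compare Selmer orders across $\phi$ and $\phi^\vee$, and then identify the resulting local factors with Tamagawa numbers and periods.

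First, I would take Galois cohomology of the exact sequence $0 \to A_1[\phi] \to A_1 \xrightarrow{\phi} A_2 \to 0$ to produce
\[0 \to A_2(K)/\phi(A_1(K)) \to \mathrm{Sel}^{\phi}(A_1/K) \to \Sha(A_1/K)[\phi] \to 0,\]
and the analogous sequence for $\phi^\vee \colon A_2^\vee \to A_1^\vee$. A finite-group order chase in these two sequences, using the decomposition $|\Sha(A_i)[\phi]| = |\Sha_{\mathrm{div}}(A_i)[\phi]| \cdot |\Sha_0(A_i)[\phi]|$ to isolate the divisible part that enters $Q(\phi)$, rewrites $Q(\phi^\vee)/Q(\phi)$ as the product of $|\mathrm{Sel}^{\phi^\vee}(A_2^\vee/K)|/|\mathrm{Sel}^{\phi}(A_1/K)|$, the torsion ratios $|A_2(K)_{\mathrm{tors}}||A_2^\vee(K)_{\mathrm{tors}}|/|A_1(K)_{\mathrm{tors}}||A_1^\vee(K)_{\mathrm{tors}}|$, and the $\Sha_0$-factor $\prod_{p \mid \deg \phi} |\Sha_0(A_1/K)[p^\infty]|/|\Sha_0(A_2/K)[p^\infty]|$. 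The problem then reduces to computing the ratio of Selmer orders.

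Next, I would invoke the Poitou--Tate nine-term exact sequence together with local Tate duality, which identifies $A_1[\phi]$ and $A_2^\vee[\phi^\vee]$ as Cartier duals. This yields a global identity
\[\frac{|\mathrm{Sel}^{\phi}(A_1/K)|}{|\mathrm{Sel}^{\phi^\vee}(A_2^\vee/K)|} = \prod_v \lambda_v(\phi),\]
where each $\lambda_v(\phi)$ is a purely local quantity built from the image of the local Kummer map $A_2(K_v)/\phi A_1(K_v) \hookrightarrow H^1(K_v, A_1[\phi])$ and the order of $A_1(K_v)[\phi]$.

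Identifying $\lambda_v(\phi)$ with local BSD data is the core of the proof. At archimedean places, a Haar-measure comparison on $A_i(K_v)$ using the forms $\omega_i$ expresses $\lambda_v(\phi)$ as the period ratio $\Omega_{K_v}(A_1,\omega_1)/\Omega_{K_v}(A_2,\omega_2)$. At non-archimedean places, Tate's analysis of $|A(K_v)/\phi A(K_v)|$ via formal groups and N\'{e}ron models yields the Tamagawa ratio $c_v(A_1/K)/c_v(A_2/K)$ times the change-of-form correction $|\omega_1/\omega_{A_1,v}^{\mathrm{min}}|_v / |\omega_2/\omega_{A_2,v}^{\mathrm{min}}|_v$. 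Assembling these local contributions produces the global products $C$ and $\Omega$ on the right-hand side, completing the proof. The main technical obstacle is the normalisation bookkeeping: one must ensure that the Haar measures on $A_i(K_v)$ induced by $\omega_i$ are consistent with those implicit in the Poitou--Tate pairing, so that the local indices and Tamagawa factors combine cleanly, and that the $\Sha_0$-correction survives precisely at primes dividing $\deg \phi$.
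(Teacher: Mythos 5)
The paper does not prove this theorem at all: it is imported verbatim from \cite[Theorem 4.3]{BSD-mod-squares}, which in turn repackages the classical isogeny-invariance computation of Cassels and Tate (cf.\ \cite[Theorem I.7.3]{ADT}). Your sketch is therefore not an alternative route but a reconstruction of the proof the paper chooses to cite, and as a strategy it is the correct one: the two Selmer exact sequences, a global duality comparison of $|\mathrm{Sel}^{\phi}(A_1/K)|$ with $|\mathrm{Sel}^{\phi^{\vee}}(A_2^{\vee}/K)|$, and the local identification of the resulting indices with Tamagawa numbers and periods. Two points, however, are asserted where the real work lies. First, the identity $|\mathrm{Sel}^{\phi}|/|\mathrm{Sel}^{\phi^{\vee}}| = \prod_v \lambda_v(\phi)$ is not an immediate consequence of Poitou--Tate; it is Cassels' dual-Selmer formula (equivalently Greenberg--Wiles), which also produces the global terms $|H^0(K,A_1[\phi])|$ and $|H^0(K,A_2^{\vee}[\phi^{\vee}])|$ that must be reconciled with the four torsion ratios in the statement --- your ``order chase'' would need to track these explicitly, and at archimedean places $\lambda_v$ involves component groups of $A_i(\mathbb{R})$ and not only a Haar-measure ratio (this is where the $2^{\dim A}$ in $\Omega_{\mathbb{C}}$ comes from). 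Second, and more substantively, the theorem as stated assumes no finiteness of $\Sha$: the entire point of the normalisation by $Q(\phi)$ and of working with $\Sha_0=\Sha/\Sha_{\textup{div}}$ is to make the formula unconditional, and justifying why the kernel of $\phi$ on $\Sha_{\textup{div}}$ enters with the stated multiplicity and why the $\Sha_0$ ratio survives only at $p\mid \deg\phi$ is exactly the content that \cite{BSD-mod-squares} adds to the classical finite-$\Sha$ version. Your outline would yield the classical statement under a finiteness hypothesis; to obtain the theorem as stated you would need to carry out this additional bookkeeping or, as the paper does, cite \cite[Theorem 4.3]{BSD-mod-squares} directly.
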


Given a field extension $L/K$, we write $\phi_L$ for the base change of $\phi$ to $L$. The following proposition shows that for carefully chosen $L$ and working modulo squares, the Cassels--Tate formula simplifies.
\begin{lemma} \label{Lem:CT_quadratic_extension_general}
Let $L/K$ be a quadratic extension such that all bad primes of $A_1/K$ and $A_2/K$ and all places $v \mid \textup{deg}(\phi)$ split in $L$. In addition, suppose that $\textup{deg}(\phi)$ is a square. Then,
\[ \frac{Q(\phi_L^{\vee})}{Q(\phi_L)}  = \frac{|A_2(L)_{\textup{tors}} |}{|A_1(L)_{\textup{tors}} |}\frac{|A_2^{\vee}(L)_{\textup{tors}} |}{|A_1^{\vee}(L)_{\textup{tors}} |} \prod_{p \mid \textup{deg}(\phi)} \frac{ | \Sha_0(A_1/K)[p^{\infty}] |}{ | \Sha_0(A_2/K)[p^{\infty}] |}  \Mod \square. \]
\end{lemma}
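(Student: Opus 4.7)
The plan is to apply Theorem~\ref{Cassels-tate} to the base-changed isogeny $\phi_L : A_{1,L} \to A_{2,L}$ over $L$ and to show that the resulting BSD prefactor collapses to a square under the stated hypotheses. The initial reduction is that the product $C(A_i/L,\omega_i)\cdot\Omega(A_i/L,\omega_i)$ is invariant under rescaling $\omega_i$ by an element of $L^\times$ (by the product formula), so we may assume without loss of generality that $\omega_1 = \phi^*\omega_2$ and then analyze the local factors place by place.

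For the finite contribution $C(A_1/L,\omega_1)/C(A_2/L,\omega_2)$ I would split the product according to whether the place $v$ of $K$ beneath $w$ splits in $L$. By hypothesis, every $v$ that is bad for either $A_i$ or divides $\deg(\phi)$ splits, so the two places $w_1,w_2\mid v$ of $L$ contribute equal local factors and together yield a square. At each remaining finite place $v$ of $K$ (necessarily good for both $A_i$ with $v\nmid\deg(\phi)$), $\phi$ is étale at $v$, so $\phi^*\omega_{A_2,w}^{\min}$ and $\omega_{A_1,w}^{\min}$ agree up to a unit for the unique $w\mid v$, and both Tamagawa numbers equal $1$; the local ratio is therefore trivial.

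The archimedean analysis is analogous. Real places of $K$ that split contribute doubled, hence squared, factors. At non-split infinite places, $L_w=\mathbb{C}$ and change of variables along $\phi$ gives
\[\int_{A_1(\mathbb{C})}|\phi^*\omega_2\wedge\phi^*\overline{\omega_2}|=\deg(\phi)\int_{A_2(\mathbb{C})}|\omega_2\wedge\overline{\omega_2}|,\]
so the local ratio equals $\deg(\phi)$, a square by assumption. Combining finite and infinite places, $\tfrac{C(A_1/L,\omega_1)\,\Omega(A_1/L,\omega_1)}{C(A_2/L,\omega_2)\,\Omega(A_2/L,\omega_2)}\equiv 1 \pmod{\square}$, and the stated identity falls out of Cassels--Tate once the torsion and $\Sha$ factors are left untouched.

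The main technical point I anticipate is the non-split finite-place check: one needs that étale-ness of $\phi$ at a good place $v\nmid\deg(\phi)$ of $K$ is preserved after the (possibly inert or ramified) base change $L_w/K_v$ and, consequently, that the Néron minimal forms over $\mathcal{O}_{L_w}$ are unit multiples of those over $\mathcal{O}_{K_v}$. This is a standard Néron-model fact but needs careful handling; the remaining checks at split places and at the archimedean places, together with the invocation of the product formula to normalise $\omega_1$, are routine.
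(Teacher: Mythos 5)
Your proposal is correct and follows essentially the same route as the paper: normalise $\omega_1=\phi^*\omega_2$ using the product formula, note that split places contribute squares, compute the ratio $\deg(\phi)=\square$ at complex places above real places, and invoke the standard N\'eron-model/Schaefer fact that the local ratio is trivial at finite places prime to $\deg(\phi)$. The only cosmetic remark is that applying Theorem~\ref{Cassels-tate} over $L$ naturally produces $\Sha_0(A_i/L)$ rather than $\Sha_0(A_i/K)$ in the final product, but this matches how the lemma is actually used later, so no change to your argument is needed.
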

\begin{proof}
We note that $C(A_1/L,\omega_1) \Omega(A_1/L,\omega_1)$ is independent of $\omega_1$. By our assumptions on $L/K$, the only terms in this product we have to consider are complex places of $L$ which lie above real places of $K$ and non-archimedean places $w \nmid \textup{deg}(\phi)$. As in the proof of \cite[Theorem 7.3]{ADT}, when $w$ is complex, we have  ${\Omega_{\mathbb{C}}(A_1,\phi^{*}\omega_2)}/{\Omega_{\mathbb{C}}(A_2,\omega_2)} = \textup{deg}(\phi) = \square.$ For a non-archimedean place $w \nmid \textup{deg}(\phi)$, $|{(\phi^{*}\omega_2/\omega_{A_1,w}^{\textup{min}})}\big{/}{(\omega_2/\omega_{A_2,w}^{\textup{min}})} |_w =1$ (see \cite[pp. 12]{Schaefer}). To conclude, $\frac{C(A_1/L,\omega_1)}{C(A_2/L,\omega_2)}  \frac{\Omega(A_1/L,\omega_1)}{\Omega(A_2/L,\omega_2)}=\square$ giving the required result. \qedhere
\end{proof}

\subsection{A decomposition for the Weil-restriction of scalars} \label{subsec_setup} In what follows, we let $A/K$ be a principally polarised abelian variety defined over a number field $K$. We let $p$ be an odd prime, $F/K$ a cyclic extension of degree $p$ and  $\textup{Res}_{K}^{F}A$ the restriction of scalars from $F$ to $K$. For brevity, we write $B=\textup{Res}_{K}^{F}A$. This is an $F/K$-twist of $A^{p}$, and the product polarisation on $A^{p}$ descends to a principal polarisation on $B/K$. Then, there are $K$-morphisms 
\[i: A \to B, \ \ \  \ \ \textup{Tr}:B \to A. \]
These are the inclusion and the trace map respectively. 

After identifying the base change $B_F=A^p$ as principally polarised abelian varieties, the base changed morphisms $i_F$ and $\textup{Tr}_F$ coincide with the diagonal inclusion and the summation map respectively. In addition, writing $i^{\vee}: B \to A$ for the dual of $i$ (having suppressed the canonical principal polarisations on both sides), we have $i^{\vee}=\textup{Tr.}$ Finally, we write $R$ to denote the kernel of the trace map. This is an abelian variety defined over $K$ and is an $F/K$-twist of $A^{p-1}$ (see \cite[Proposition 2.4]{Stein}).

\begin{lemma} \label{kernel_of_isogeny} Let $\phi: R \times A \to B$ be given by $(x,y) \mapsto x + i(y)$. Then, $\phi$ is a $K$-rational isogeny with  $\textup{ker}(\phi) \cong A[p]$.
\end{lemma}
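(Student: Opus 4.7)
The plan is to prove the lemma in three stages: establish $K$-rationality of $\phi$ from its construction, identify the kernel after base change to $F$ where everything becomes explicit, and then descend the identification back to $K$. The $K$-rationality is immediate: $R \subset B$ is the kernel of the $K$-rational morphism $\textup{Tr}$, and both $i: A \to B$ and the group law on $B$ are defined over $K$, so $\phi = \mu_B \circ (\textup{id}_R \times i)$ (with $\mu_B$ the addition on $B$) is visibly a $K$-morphism.

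For the kernel, I would pass to $F$. Under the identification $B_F \cong A^p$, the map $i_F$ becomes the diagonal $y \mapsto (y,\dots,y)$ and $\textup{Tr}_F$ becomes the summation map, so $R_F = \{(x_1,\dots,x_p) \in A^p : \sum x_i = 0\}$ and $\phi_F$ reads $((x_1,\dots,x_p), y) \mapsto (x_1+y,\dots,x_p+y)$. A short calculation shows that $(x,y) \in \textup{ker}(\phi_F)$ forces $x_i = -y$ for every $i$, and then the condition $\sum x_i = 0$ becomes $-py = 0$, placing $y$ in $A[p]$. Together with the dimension count $\dim(R \times A) = (p-1)\dim A + \dim A = \dim B$, this shows $\phi$ is an isogeny and exhibits $\textup{ker}(\phi_F) \cong A[p]$ via $y \mapsto ((-y,\dots,-y), y)$.

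To descend the identification, I would invoke the $K$-rational identity $\textup{Tr} \circ i = [p]_A$, which holds because both sides are $K$-morphisms that agree after base change to $F$ (where the composition of the diagonal with the summation map is clearly multiplication by $p$). This ensures that the $K$-rational morphism $\psi: A[p] \to R \times A$ given by $y \mapsto (-i(y), y)$ is well-defined (since $\textup{Tr}(-i(y)) = -py = 0$) and factors through $\textup{ker}(\phi)$; because $\psi$ becomes an isomorphism after base change to $F$, it is automatically the desired $K$-isomorphism $A[p] \xrightarrow{\sim} \textup{ker}(\phi)$. The only genuine subtlety is ensuring the kernel identification is $K$-rational rather than merely an artifact of the $F$-base change, and this is handled by producing $\psi$ as a $K$-rational candidate from the outset.
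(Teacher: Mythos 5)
Your proof is correct and follows essentially the same route as the paper: base change to split the twist, compute the kernel explicitly as $\{(-y,\dots,-y,y): y \in A[p]\}$, and identify it with $A[p]$ via a $K$-rational map (the paper uses the projection $R \times A \to A$ restricted to $\ker(\phi)$, you use its inverse $y \mapsto (-i(y),y)$). Your treatment of the descent step is slightly more explicit than the paper's, but the argument is the same.
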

\begin{proof}
It suffices to show $\textup{ker}(\phi) \cong A[p]$. After base change to $\overline{K}$, we identify $R_{\overline{K}} = A^{p-1}$ and $B_{\overline{K}}=A^{p}$. Then, on $\overline{K}$-points $\phi$ is given by $(x_1,\dots,x_{p-1},y) \mapsto (y+x_1,\dots,y+x_{p-1},y-\sum_{i=1}^{p-1}x_{i})$ . Thus, restricting the projection $R \times A \to A$ to $\textup{ker}(\phi)$ allows us to deduce that $\textup{ker}(\phi) \cong A[p]$.
\end{proof}

\subsection{An expression for $\Sha(R/L)$ modulo squares}
In this subsection, we relate  $| \Sha(R/L)[p^{\infty}]| \Mod \square$ to the torsion subgroup of $R$ and the rank of $A/L$.

\begin{lemma} \label{Lem:size_of_shas} Let $L/K$ be a number field extension such that $L\otimes_{K}F$ is a field. Then, the following hold.
\begin{enumerate}
\item For all odd primes $q \neq p$, $\left | \Sha_{0}(R/L)[q^{\infty}]\right | = \square$.
\item If, in addition, $A$ is an elliptic curve, $\left | \Sha_{0}(R/L)[2^{\infty}]\right | = \square$.
\end{enumerate}
\end{lemma}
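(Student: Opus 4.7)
The plan is to exploit the isogeny $\phi:R\times A\to B$ of Lemma~\ref{kernel_of_isogeny}, whose kernel is $A[p]$, in combination with the Cassels--Tate pairing on principally polarised abelian varieties. Writing $\phi_L$ for its base change to $L$ and taking any prime $q\neq p$, the kernel of $\phi_L$ has order coprime to $q$, so $\phi_L$ induces an isomorphism on the $q$-primary parts of $\Sha$, hence also on $\Sha_0[q^{\infty}]$. Since $\Sha$ of a product splits as a direct product, this yields the key identity
\[ |\Sha_0(R/L)[q^{\infty}]|\cdot|\Sha_0(A/L)[q^{\infty}]| \;=\; |\Sha_0(B/L)[q^{\infty}]|. \]

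For part (1), I would invoke Flach's theorem. Both $A/L$ and $B/L$ are principally polarised: the former by hypothesis, and the latter because the product polarisation on $A^{p}$ descends to $B$ as recalled in Section~\ref{subsec_setup}. Hence Cassels--Tate is skew-symmetric on $\Sha_0[q^{\infty}]$ in both cases. For odd $q$, skew-symmetric is automatically alternating, so $|\Sha_0(A/L)[q^{\infty}]|$ and $|\Sha_0(B/L)[q^{\infty}]|$ are both squares, and the displayed identity forces $|\Sha_0(R/L)[q^{\infty}]|=\square$.

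For part (2), the displayed identity still applies at $q=2$ (since $p$ is odd), so the task reduces to showing that $|\Sha_0(A/L)[2^{\infty}]|$ and $|\Sha_0(B/L)[2^{\infty}]|$ are both squares. The first is classical: the Cassels--Tate pairing on an elliptic curve is alternating, so its $2$-primary $\Sha_0$ has square order. For the second, I would use the hypothesis that $L\otimes_{K}F$ is a field. Weil restriction commutes with base change along $K\to L$, so $B_L\cong\textup{Res}_L^{L\otimes_K F}A_{L\otimes_K F}$, and Shapiro's lemma supplies an isomorphism $\Sha(B/L)\cong\Sha(A/L\otimes_K F)$. Since $A$ remains an elliptic curve after base change to $L\otimes_K F$, $|\Sha_0(A/L\otimes_K F)[2^{\infty}]|$ is a square, and therefore so is $|\Sha_0(B/L)[2^{\infty}]|$.

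The main obstacle is really concentrated in part (2): Flach's theorem alone only tells us that $|\Sha_0(B/L)[2^{\infty}]|$ is a square or twice a square, so a direct appeal to principal polarisations would fall one bit short. The whole purpose of the elliptic-curve hypothesis on $A$ is to allow the Shapiro transport to land on an abelian variety for which the Cassels--Tate pairing is genuinely alternating at the prime $2$. Everything else amounts to keeping careful track of the isogeny-kernel computation from Lemma~\ref{kernel_of_isogeny} and the compatibility of base change with Weil restriction under the $L\otimes_K F$ field hypothesis.
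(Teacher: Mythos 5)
Your proposal is correct and follows essentially the same route as the paper: the isogeny $\phi: R\times A \to B$ of degree a power of $p$ gives the product identity on $\Sha_0[q^{\infty}]$ for $q\neq p$, principal polarisations handle the odd part, and the Weil restriction/Shapiro transport $\Sha_0(B/L)[2^{\infty}]\cong \Sha_0(A/L\otimes_K F)[2^{\infty}]$ handles the $2$-part when $A$ is an elliptic curve. The only cosmetic difference is that for part (1) you rederive the odd-primary square-order statement from Flach's skew-symmetry (plus nondegeneracy of the pairing on $\Sha_0$), whereas the paper cites this directly as \cite[Theorem 8]{poonen1999cassels}.
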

\begin{proof}
By Lemma \ref{kernel_of_isogeny},  $\textup{deg}(\phi)=p^{2\textup{dim}(A)}$. When $q \neq p$, this isogeny induces an isomorphism $\Sha_{0}(A/L)[q^{\infty}] \times \Sha_{0}(R/L)[q^{\infty}] \stackrel{\sim}\to \Sha_{0}(B/L)[q^{\infty}]$. Since $A$ and $B$ are principally polarised, \cite[Theorem 8]{poonen1999cassels} asserts that $\Sha_{0}(A/L)[q^{\infty}]$ and $\Sha_{0}(B/L)[q^{\infty}]$ are both of square order for all odd $q$. This proves (1). Further, by \cite[Proposition A.5.2]{reductive-groups}, $B_L \cong \textup{Res}_{L}^{L\otimes_{K}F}A$, and by combining \cite[pp. 178(a)]{MR0330174} with Shapiro's Lemma, $\Sha_{0}(B/L)[2^{\infty}] \cong \Sha_{0}(A/L\otimes_{K}F)[2^{\infty}]$. Therefore, if $A$ is an elliptic curve, both $\Sha_{0}(A/L)[2^{\infty}]$ and $\Sha_{0}(B/L)[2^{\infty}]$ are of square order by \cite[Theorem 8, Corollary 12]{poonen1999cassels}. Claim (2) follows.
\end{proof}
In what follows, we write $\textup{rk}_{p}(A/K)$ for the $\mathbb{Z}_p$-corank of the $p^{\infty}$-Selmer group $\textup{Sel}_{p^{\infty}}(A/K):= \varinjlim_{n \geq 1} \textup{Sel}_{p^n}(A/K)$. This coincides with the Mordell--Weil rank of $A$ plus the multiplicity of $\mathbb{Q}_{p}/\mathbb{Z}_{p}$ in its Tate--Shafarevich group.
\begin{proposition} \label{Prop:CT_Weil_Restriction} Let $L/K$ be a quadratic extension such that all bad primes of $A/K$ and all primes dividing either $\textup{deg}(\phi)$ or the relative discriminant $\Delta(F/K)$ split in $L$. Then, the following hold.
\begin{enumerate}
\item Up to rational squares, 
\begin{equation*} 
\left | \Sha_{0}(R/L)[p^{\infty}]\right | \equiv \frac{Q(\phi_{L})}{Q{(}\phi^{\vee}_{L}{)}}  \frac{\left | R(L)[p^{\infty}]\right|} {\left | R^{\vee}(L)[p^{\infty}]\right|}  \Mod \square.
\end{equation*}
\item  If, in addition, $\textup{rk}_{p}(A/L)= \textup{rk}_{p}(A/L\otimes_{K}F)$, then
\begin{equation*} 
\left | \Sha_{0}(R/L)[p^{\infty}]\right | \equiv p^{\textup{rk}_{p}(A/L)} \frac{\left | R(L)[p^{\infty}]\right|} {\left | R^{\vee}(L)[p^{\infty}]\right|}  \Mod \square.
\end{equation*}
\end{enumerate}
\end{proposition}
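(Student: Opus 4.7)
The plan is to prove each part in turn, building on Lemma \ref{Lem:CT_quadratic_extension_general}. For (1), I would apply that lemma to the isogeny $\phi_L \colon R_L \times A_L \to B_L$ of Lemma \ref{kernel_of_isogeny}. Its hypotheses hold: $\textup{deg}(\phi) = p^{2\textup{dim}A}$ is a square; since $B_F \cong A^p$, the bad primes of $R\times A$ and of $B$ lie among the bad primes of $A/K$ together with the primes ramifying in $F/K$, all of which split in $L$ by assumption; and primes above $p$ also split. The product in the formula then reduces to the single ratio $|\Sha_0(R \times A/L)[p^\infty]|/|\Sha_0(B/L)[p^\infty]|$, and since $A,B$ are principally polarised with $p$ odd, $|\Sha_0(A/L)[p^\infty]|$ and $|\Sha_0(B/L)[p^\infty]|$ are squares by \cite[Theorem 8]{poonen1999cassels}, isolating $|\Sha_0(R/L)[p^\infty]|$ modulo squares. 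I would simplify the torsion factor using $A \cong A^\vee$, $B \cong B^\vee$ together with the observation that for $q \neq p$ the isogeny $\phi$ is an isomorphism on $q^\infty$-torsion (its kernel is $A[p]$), forcing $|R(L)[q^\infty]| = |R^\vee(L)[q^\infty]|$. Since any nonzero rational agrees with its inverse modulo squares, $Q(\phi_L^\vee)/Q(\phi_L) \equiv Q(\phi_L)/Q(\phi_L^\vee) \Mod \square$, which yields (1).

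For (2), the remaining task is to show $Q(\phi_L)/Q(\phi_L^\vee) \equiv p^{\textup{rk}_p(A/L)} \Mod \square$. Shapiro's lemma applied to $B_L \cong \textup{Res}_L^{L \otimes_K F} A$ gives $\textup{rk}_p(B/L) = \textup{rk}_p(A/L \otimes_K F)$, and the isogeny $R \times A \sim B$ gives $\textup{rk}_p(R/L) + \textup{rk}_p(A/L) = \textup{rk}_p(B/L)$. The hypothesis therefore forces $\textup{rk}_p(R/L) = 0$, so $R(L)$ and $R^\vee(L)$ are finite, $\Sha(R/L)_{\textup{div}}[p^\infty] = \Sha(R^\vee/L)_{\textup{div}}[p^\infty] = 0$, and moreover $\textup{rk}(A/L) = \textup{rk}(A/L \otimes_K F)$.

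Writing $Q = Q_{\textup{tors}} \cdot Q_\Sha$, I would treat the two factors separately. On the torsion side, $(R \times A)(L)/\textup{tors} = A(L)/\textup{tors} =: M$ and $B(L)/\textup{tors} = A(L \otimes_K F)/\textup{tors} =: N$; the map $\phi_L$ becomes the natural inclusion $i_L \colon M \hookrightarrow N$ and (using $A \cong A^\vee$, $B \cong B^\vee$, and $i^\vee = \textup{Tr}$) $\phi_L^\vee$ becomes $\textup{Tr}_L \colon N \to M$, satisfying $\textup{Tr}_L \circ i_L = [p]$ on $M$. Choosing bases realising a Smith normal form $i_L(e'_j) = d_j e_j$ of $i_L$, this relation forces $\textup{Tr}_L$ to be diagonal in these bases with $d_j \mid p$, hence $d_j \in \{1,p\}$. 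If $d_j = p$ for exactly $s$ indices, then $[N:M] = p^s$, $\textup{Tr}_L(N) = pN$, and $[M : \textup{Tr}_L(N)] = p^{r-s}$ with $r = \textup{rk}(A/L)$, giving $Q_{\textup{tors}}(\phi_L)/Q_{\textup{tors}}(\phi_L^\vee) = p^{2s-r} \equiv p^r \Mod \square$. On the Sha side, the maps on $\Sha_{\textup{div}}[p^\infty]$ unwind, via Shapiro and $i^\vee = \textup{Tr}$, to restriction and corestriction between $\Sha_{\textup{div}}(A/L)[p^\infty]$ and $\Sha_{\textup{div}}(A/L \otimes_K F)[p^\infty]$, divisible groups of common corank $a = \textup{rk}_p(A/L) - r$. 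Pontryagin-dualising these maps and invoking $\textup{cor} \circ \textup{res} = [p]$ yields $|\ker \textup{res}| \cdot |\ker \textup{cor}| = p^a$, whence $Q_\Sha(\phi_L)/Q_\Sha(\phi_L^\vee) \equiv p^a \Mod \square$. Combining, $Q(\phi_L)/Q(\phi_L^\vee) \equiv p^{r+a} = p^{\textup{rk}_p(A/L)} \Mod \square$.

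The main technical obstacle is the correct identification of $\phi_L^\vee$ as (a direct summand of) the trace and of the corestriction map after invoking the principal polarisations on $A$ and $B$; this identification, together with the relation $\textup{Tr}_L \circ i_L = [p]$, is the common ingredient driving both the Smith normal form calculation on the torsion side and the Pontryagin-duality count on the $\Sha$ side.
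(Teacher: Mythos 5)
Your proof of (1) is essentially the paper's: apply Lemma \ref{Lem:CT_quadratic_extension_general} to $\phi_L$ and use the principal polarisations together with Poonen--Stoll to remove the $A$- and $B$-terms; you are in fact more explicit than the paper about why the prime-to-$p$ torsion of $R$ and $R^{\vee}$ cancels. For (2) you take a genuinely different route. Writing $j:R\hookrightarrow B$ for the inclusion, the paper uses multiplicativity of $Q$ to get $Q(\phi_L)/Q(\phi_L^{\vee})\equiv Q(\phi_L^{\vee}\circ\phi_L)\Mod\square$, notes that $\phi^{\vee}\circ\phi$ is diagonal (the cross terms vanish since $\textup{Tr}\circ j=0$), and that once $\textup{rk}_p(R/L)=\textup{rk}_p(R^{\vee}/L)=0$ the $R$-component contributes trivially, so everything collapses to $Q([p]_{A_L})=p^{\textup{rk}_p(A/L)}$ in one step. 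You instead compute $Q(\phi_L)$ and $Q(\phi_L^{\vee})$ separately, via Smith normal form on the Mordell--Weil lattices and a duality count on $\Sha_{\textup{div}}[p^{\infty}]$; this works and gives the same answer, but is longer and requires identifying the two maps individually rather than only their composite. One point in your version needs a word: $\phi_L^{\vee}$ maps $B$ to $R^{\vee}\times A$, so $\ker\bigl(\phi^{\vee}_{L}\ |\ \Sha(B/L)_{\textup{div}}\bigr)$ is a priori $\ker(j^{\vee}_{*})\cap\ker(\textup{cor})$ rather than $\ker(\textup{cor})$. This is remedied in one line: $\textup{res}$ surjects from $\Sha(A/L)_{\textup{div}}[p^{\infty}]$ onto $\Sha(B/L)_{\textup{div}}[p^{\infty}]$ (equal coranks) and $j^{\vee}\circ i=0$, so $\Sha(B/L)_{\textup{div}}[p^{\infty}]\subseteq\ker(j^{\vee}_{*})$; the analogous issue on the torsion side is harmless because $R^{\vee}(L)$ is finite. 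Neither point affects your conclusion.
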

\begin{proof}
Let $v$ be a place of bad reduction for $B/K$. By \cite[Proposition 1]{MR0330174}, this can only happen when $A/K$ has bad reduction at $v$ or when $v \mid \Delta(F/K)$. This shows that all bad primes of $B/K$ split in $L$.
Since $A/K$ is principally polarised, $\left |A(L)[p^{\infty}]\right|=\left | A^{\vee}(L)[p^{\infty}] \right |$ and $\left |\Sha_{0}(A/L)[p^{\infty}]\right|=\left | \Sha_{0}(A^{\vee}/L)[p^{\infty}] \right |$. The same holds for $B$. Part (1) follows by combining  Lemma \ref{Lem:CT_quadratic_extension_general} and Lemma \ref{kernel_of_isogeny}. For the second claim, it suffices to show that if $\textup{rk}_{p}(A/L)= \textup{rk}_{p}(A/L\otimes_{K}F)$, then $ {Q(\phi_{L})}/{Q{(}\phi^{\vee}_{L}{)}} =p^{\textup{rk}_{p}(A/L)}$. After suppressing the principal polarisations $A=A^{\vee}, B=B^{\vee}$, we view $\phi^{\vee}$ as a morphism $B \to A \times R^{\vee}$. Therefore, 
$\frac{Q(\phi_{L})}{Q(\phi^{\vee}_{L})} \equiv {Q(\phi^{\vee}_{L}\circ\phi_{L}: A \times R \to A \times R^{\vee}}) \Mod \square.$ Since the degree of $\phi^{\vee} \circ \phi$ is a power of $p$, it induces an isomorphism $\Sha(A\times R/L)_{\textup{div}}[q^{\infty}] \stackrel{\sim}{\to} \Sha(A\times {R}^{\vee}/L)_{\textup{div}}[q^{\infty}]$ for $q \neq p$. From this, we deduce $\textup{ker}(\phi_L^{\vee} \circ \phi_L \ | \ {\Sha}_{\textup{div}}) = \textup{ker}(\phi_L^{\vee} \circ \phi_L \ | \ {\Sha}_{\textup{div}}[p^{\infty}]) $. Since $B_L \cong \textup{Res}_{L}^{L\otimes_{K}F}A$ (see \cite[Proposition A.5.2]{reductive-groups}), then by combining \cite[pp. 178(a)]{MR0330174} with Shapiro's Lemma gives $\textup{Sel}_{p^{\infty}}(B/L) \cong \textup{Sel}_{p^{\infty}}(A/L\otimes_{K}F)$. By assumption, $\textup{rk}_{p}(A/L)=\textup{rk}_{p}(A/L\otimes_{K} F)$, from which we deduce $\textup{rk}_{p}(R/L)=\textup{rk}_{p}(R^{\vee}/L)=0$. As a result, ${Q(\phi^{\vee}_{L}\circ\phi_{L}})$ coincides with $Q(i_L^{\vee}\circ i_L)$, where we view $i_L^{\vee}\circ i_L$ as an endomorphism of $A_L$. Using the description of $i$ and $i^{\vee}$ from \S\ref{subsec_setup}, $i^{\vee} \circ i=[p]_{A}$. This gives the required result. \qedhere
 
\end{proof}

\section{$\Sha$ of non-square order}

We retain all the notation from \S \ref{sec_sha_mod_squares}. In this section,  we restrict to the case when $A=E$ is an elliptic curve, and so $R$ is an abelian variety of dimension $p-1$. In addition, we fix the following notation.

\begin{notation} \label{notation_fields_cylotomic}We write $K_{\infty}$ to denote the cyclotomic $\mathbb{Z}_{p}$-extension of $\mathbb{Q}$, and $K_n=\mathbb{Q}(\zeta_{p^n} + \zeta_{p^{n}}^{-1})$ for the $n^{\textup{th}}$ layer of this extension. For $d$ a positive, square-free integer, we let $L_n= K_n \cdot \mathbb{Q}(\sqrt{-d})$. \end{notation}

We first show that for $n$ sufficiently large, $\Sha(R/L_n)$ is finite for any choice of $d$. Further, having carefully selected $d $, we show that $|\Sha(R/L_n)| \equiv p \Mod \square$. We begin by proving a lemma involving torsion on $R$ and $R^{\vee}$.
\begin{lemma} \label{torsion_quotient_elliptic_curve}
Let $E$ be an elliptic curve defined over $\mathbb{Q}$. Then, for any $n \geq 0$ and any prime $p \geq 11$ both $R(L_n)[p^{\infty}]$ and $ R^{\vee}(L_n)[p^{\infty}]$ are trivial.
\end{lemma}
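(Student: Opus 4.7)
The plan is to reduce the vanishing of $R(L_n)[p^{\infty}]$ to the vanishing of the $p$-torsion of $E$ over the compositum $L_n \cdot F$, and then to exploit that this compositum is abelian over $\mathbb{Q}$ with a very restricted Galois group, forcing a contradiction via classical torsion theorems for elliptic curves.

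First I would observe that, since $R(L_n)_{\textup{tors}}$ is finite by Mordell--Weil, $R(L_n)[p^{\infty}] = 0$ if and only if $R(L_n)[p] = 0$. Using the canonical identification $R_F \cong E^{p-1}$ (as the kernel of the summation map $B_F \cong E^p \to E$), there is an inclusion $R(L_n)[p] \hookrightarrow R(L_n \cdot F)[p] \cong E(L_n \cdot F)[p]^{p-1}$, and the same reasoning applies to $R^{\vee}$, which is also an $F/\mathbb{Q}$-twist of $E^{p-1}$ via the self-duality of $E$. It thus suffices to show that $E(L_n \cdot F)[p] = 0$ for $p \geq 11$.

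Second, I would use that $L_n \cdot F$ is abelian over $\mathbb{Q}$ (as a composite of the abelian extensions $K_n \subset \mathbb{Q}^{\textup{ab}}$, $\mathbb{Q}(\sqrt{-d})$, and $F$), with $\textup{Gal}(L_n \cdot F / \mathbb{Q})$ of the form $\mathbb{Z}/2\mathbb{Z} \times (\textup{abelian } p\textup{-group})$, hence of order $2p^r$ for some $r \geq 0$. Assuming for contradiction that a non-zero $P \in E(L_n \cdot F)[p]$ exists, the $\textup{Gal}(\overline{\mathbb{Q}}/\mathbb{Q})$-orbit of $P$ inside $E[p]$ factors as the orbit of the line $\langle P \rangle \in \mathbb{P}(E[p])$ times the orbit of $P$ within that line under its stabiliser; the first divides $p+1$ and the second divides $p-1$. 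Consequently $[\mathbb{Q}(P):\mathbb{Q}]$ is coprime to $p$ and divides $2p^r$, forcing $[\mathbb{Q}(P):\mathbb{Q}] \in \{1, 2\}$.

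The degree-one case is ruled out by Mazur's torsion theorem for $p \geq 11$. In the degree-two case, the unique quadratic subfield of $L_n \cdot F$ is $\mathbb{Q}(\sqrt{-d})$, so $P \in E(\mathbb{Q}(\sqrt{-d}))[p]$; for $p \geq 17$ this contradicts Kamienny's extension of Mazur's theorem to quadratic fields. The hard part will be the residual range $p \in \{11, 13\}$, where $p$-torsion of elliptic curves can occur over quadratic fields, and some additional input is needed -- perhaps an explicit analysis of the $\mathbb{Q}(\sqrt{-d})$-points on $X_1(p)$ for these two primes, or a standing assumption that $d$ is chosen to avoid the finitely many exceptional quadratic fields for the given $E$.
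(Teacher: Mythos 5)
Your reduction to showing $E(L_{n+1})[p]=0$, where $L_{n+1}=L_n\cdot F$, is fine and agrees with the paper's first step, but the Galois--orbit argument that follows has a genuine gap. You assert that the orbit of the line $\langle P\rangle$ in $\mathbb{P}(E[p])$ has size dividing $p+1$. Orbits of a subgroup $H\leq \mathrm{GL}_2(\mathbb{F}_p)$ acting on $\mathbb{P}^1(\mathbb{F}_p)$ have cardinality dividing $|H|$, not the cardinality $p+1$ of the set: a Borel subgroup, for example, has an orbit of size $p$ on $\mathbb{P}^1(\mathbb{F}_p)$, so as stated your deduction that $[\mathbb{Q}(P):\mathbb{Q}]$ is coprime to $p$ does not follow. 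The conclusion is in fact true when $\mathbb{Q}(P)$ lies in an abelian extension of $\mathbb{Q}$, but it needs an argument you have not supplied: if $p$ divides the order of the mod-$p$ image $H$, then $H$ either contains $\mathrm{SL}_2(\mathbb{F}_p)$ or lies in a Borel (Serre), and in either case one checks, using $\det H=\mathbb{F}_p^{\times}$ (Weil pairing plus surjectivity of the cyclotomic character, with $p$ odd), that the unipotent elements of $H$ lie in $[H,H]$; since $\mathbb{Q}(P)\subseteq\mathbb{Q}^{\mathrm{ab}}$ forces $\mathrm{Stab}_H(P)\supseteq[H,H]$, the orbit of $P$ has size prime to $p$. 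The second gap is the one you flag yourself: the case $p\in\{11,13\}$ with $[\mathbb{Q}(P):\mathbb{Q}]=2$ is left open. It is actually easy and needs no hypothesis on $d$, no Kamienny, and no analysis of $X_1(p)$: for odd $p$ the Galois involution of $\mathbb{Q}(\sqrt{-d})/\mathbb{Q}$ splits $E(\mathbb{Q}(\sqrt{-d}))[p]$ as $E(\mathbb{Q})[p]\oplus E_{-d}(\mathbb{Q})[p]$, and both summands vanish for $p\geq 11$ by Mazur. As written, though, the proof is incomplete at both points.

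For comparison, the paper sidesteps the image-of-Galois analysis entirely: it applies the quadratic-twist decomposition at the top, $E(L_{n+1})[p^{\infty}]\cong E(K_{n+1})[p^{\infty}]\times E_{-d}(K_{n+1})[p^{\infty}]$, then invokes the theorem of Chou--Daniels--Krijan--Najman that torsion does not grow in the cyclotomic $\mathbb{Z}_p$-extension for $p\geq 5$, reducing everything to Mazur over $\mathbb{Q}$. Once your two gaps are filled, your route gives a genuine alternative that trades the $\mathbb{Z}_p$-extension input for the structure theory of subgroups of $\mathrm{GL}_2(\mathbb{F}_p)$, but the filled-in version ends up using the same twisting trick the paper relies on.
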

\begin{proof}
As  abelian varieties over $L_{n+1}$, both $R$ and $R^{\vee}$ are isomorphic to  $E^{p-1}.$ It therefore suffices to show that $E(L_{n+1})[p^{\infty}]$ is trivial for $ p \geq 11$. Let $E_{-d}$ be the quadratic twist of $E$ by $-d$. Then, there exists a map $E(L_{n+1}) \to E(K_{n+1}) \times E_{-d}(K_{n+1})$ whose kernel and cokernel are $2$-groups. Then, for any odd prime $p$, this induces an isomorphism  \[E(L_{n+1})[p^{\infty}] \stackrel{\sim}\to E(K_{n+1})[p^{\infty}] \times E_{-d}(K_{n+1})[p^{\infty}].\] By \cite[Theorem 1.1]{torsion_over_Zp},  $E(K_{\infty})[p^{\infty}]=E(\mathbb{Q})[p^{\infty}]$ (similarly for $E_{-d})$ for any $p\geq 5$. If $p\geq 11$, Mazur's theorem \cite[Theorem 2]{Mazur-isogeny-theorem} asserts that $E(\mathbb{Q})[p^{\infty}]$ and $E_{-d}(\mathbb{Q})[p^{\infty}]$ are both trivial. Therefore, $E(L_{n+1})[p^{\infty}]$ is trivial.
\end{proof}
 
The following result is a formal consequence of a theorem by Kato and Rohrlich. 

\begin{theorem}\label{RK}
Let $E$ be an elliptic curve defined over $\mathbb{Q}$. Then, there exists a positive integer $n$ such that
\begin{enumerate}
\item  $\textup{ord}_{s=1}L(E/L_{n},s) = \textup{ord}_{s=1}L(E/L_{n+1},s),$
\item $\Sha(R/L_{n})$ is finite, 
\item $\textup{rk}_{p}(E/L_{n+1}) = \textup{rk}_{p}(E/L_{n}).$
\end{enumerate}
\end{theorem}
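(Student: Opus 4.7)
My plan is to derive the three items jointly as a consequence of Rohrlich's non-vanishing theorem along cyclotomic towers together with Kato's Euler system bound on $p$-Selmer ranks.

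For (1), I would factor $L(E/L_n,s)=\prod_{\chi}L(E\otimes\chi,s)$ over the Dirichlet characters $\chi$ of $\textup{Gal}(L_n/\mathbb{Q})$; this is legitimate because $L_\infty=\bigcup L_n$ is abelian over $\mathbb{Q}$ (compositum of a $\mathbb{Z}_p$-cyclotomic extension with $\mathbb{Q}(\sqrt{-d})$). Rohrlich's theorem provides a finite exceptional set $S$ of characters $\chi$ of $\textup{Gal}(L_\infty/\mathbb{Q})$ with $L(E\otimes\chi,1)=0$. Choosing $n$ large enough that every $\chi\in S$ already factors through $\textup{Gal}(L_n/\mathbb{Q})$ ensures that passing from $n$ to $n+1$ introduces no new vanishing contributions, so the order stabilises and (1) follows.

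For (3), I would invoke Kato's bound $\textup{rk}_p(E/M)\leq \textup{ord}_{s=1}L(E/M,s)$, valid for any abelian $M/\mathbb{Q}$. Combined with (1), this gives a uniform upper bound on $\textup{rk}_p(E/L_n)$ for $n$ large. Standard inflation--restriction produces an injection $\textup{Sel}_{p^\infty}(E/L_n)\hookrightarrow \textup{Sel}_{p^\infty}(E/L_{n+1})^{\textup{Gal}(L_{n+1}/L_n)}$ with finite kernel, so $\textup{rk}_p(E/L_n)$ is non-decreasing. A non-decreasing, bounded sequence of non-negative integers stabilises, so (3) holds (with (1) still holding) at some sufficiently large $n$.

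For (2), observe that $L_n\supseteq K_1$ for $n\geq 1$, so $R$ trivialises as $R_{L_n}\cong E_{L_n}^{p-1}$ and hence $\Sha(R/L_n)[p^\infty]\cong \Sha(E/L_n)[p^\infty]^{p-1}$. Kato's theorem in its refined form---finiteness of the $\chi$-isotypic component of $\textup{Sel}_{p^\infty}(E/L_\infty)$ whenever $L(E\otimes\chi,1)\neq 0$---combined with the finiteness of Rohrlich's exceptional set, forces $\Sha(E/L_n)[p^\infty]$ to be finite for $n$ large. The main obstacle is the literal finiteness of $\Sha(R/L_n)$: Kato--Rohrlich natively controls only the $p^\infty$-primary part, so one reads (2) as finiteness of $\Sha(R/L_n)[p^\infty]$, which is precisely the input the Cassels--Tate calculations of Section~\ref{sec_sha_mod_squares} consume.
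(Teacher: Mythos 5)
Part (1) of your proposal is essentially the paper's argument (the paper factors $L(E/L_n,s)=L(E/K_n,s)L(E_{-d}/K_n,s)$ and applies Rohrlich to $E$ and $E_{-d}$ separately over the cyclotomic tower, which is the same as your character-by-character factorisation). The serious problem is part (2). Your claim that $R_{L_n}\cong E_{L_n}^{p-1}$ because $L_n\supseteq K_1$ is false: $R$ is a $K_{n+1}/K_n$-twist of $E^{p-1}$ and only trivialises over fields containing $K_{n+1}$; the field $L_n=K_n\cdot\mathbb{Q}(\sqrt{-d})$ does not contain $K_{n+1}$ (the paper's Lemma \ref{torsion_quotient_elliptic_curve} passes to $L_{n+1}$ for exactly this reason). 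If your isomorphism held, then $|\Sha(R/L_n)[p^{\infty}]|$ would be a $(p-1)$-st power, hence a square, and Theorem \ref{size_of_Sha} would be false --- so this is not a repairable slip but a contradiction with the main result. Nor can you simply work over $L_{n+1}$ instead: that would reduce (2) to finiteness of $\Sha(E/L_{n+1})$ itself, which requires non-vanishing of $L(E,\chi,1)$ for \emph{all} characters $\chi$ of $\textup{Gal}(L_{n+1}/\mathbb{Q})$ including the imprimitive ones, and this is both unavailable and false in the intended application (where $\textup{rk}_p(E/L_n)$ is arranged to be odd). The paper's mechanism is precisely designed to avoid this: the exact sequence $0\to R\to B\xrightarrow{\textup{Tr}}E\to 0$ yields a map $\Sha(R/L_n)\to\textup{ker}(N_H\mid\Sha(E/L_{n+1}))$ with finite kernel, and the target is governed only by the \emph{primitive} characters $\tau$, exactly those for which Rohrlich gives $L(E,\tau,1)\neq 0$ and Kato's Corollary 14.3 gives finiteness of $\Sha(E/L_{n+1})^{(\tau)}$. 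Note also that Kato's result gives finiteness of the full isotypic piece, not merely its $p$-primary part, so statement (2) really is finiteness of all of $\Sha(R/L_n)$; this is needed downstream, since Theorem \ref{size_of_Sha} asserts $|\Sha(R/L)|=p\cdot\square$ for an honestly finite group, and your proposed weakening to the $p$-part would not suffice.

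There is a second, smaller gap in (3): the inequality $\textup{rk}_p(E/M)\leq\textup{ord}_{s=1}L(E/M,s)$ for arbitrary abelian $M/\mathbb{Q}$ is not a theorem of Kato (or anyone); it would give one direction of the BSD rank conjecture over all abelian fields. What Kato proves is the implication $L(E,\chi,1)\neq 0\Rightarrow$ the $\chi$-isotypic component of the Selmer group has corank $0$, which says nothing about the components where $L$ vanishes and hence yields no uniform bound on $\textup{rk}_p(E/L_n)$. Your monotonicity observation is fine, but the boundedness is unjustified. The repair is to argue character by character: for $n$ large, $\textup{rk}_p(E/L_{n+1})-\textup{rk}_p(E/L_n)$ is the sum of the coranks of the primitive $\tau$-components, each of which vanishes by Kato plus Rohrlich --- equivalently, as the paper does, one shows $\textup{rk}_p(R/L_n)=0$ by the same argument as in (2).
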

\begin{proof}
As $E$ is defined over $\mathbb{Q}$, $\textup{Res}_{K_{n}}^{L_{n}}E $ is isogenous to $E \times E_{-d}$ giving an equality of $L$-functions
\begin{equation*} \label{decomposition_of_L_functions} L(E/L_{n},s) = L(E/K_{n},s)L(E_{-d}/K_{n},s), \end{equation*}
By Rohrlich \cite[pp. 409]{Rohrlich}, the sequence $\{\textup{ord}_{s=1}L(E/K_{n},s)\}_{n \geq 1}$ stabilises (similarly for $E_{-d}$), and so $\textup{ord}_{s=1}L(E/L_{n},s) =\textup{ord}_{s=1}L(E/L_{n+1},s)$ for $n$ sufficiently large. This proves (1). 
Part (2) is a consequence of a result by Kato \cite{Kato}. Following the notation introduced there, for $G$ an abelian group, $\tau \in \widehat{G}$, and a $G$-module $M$, we let $M^{(\tau)} :=\{ x \in M : I_{\tau} \cdot x =0\}$ where $I_{\tau} \subseteq \mathbb{Z}[G]$ denotes the kernel of the map $\mathbb{Z}[G] \to \mathbb{C}^{\times}$ induced by $\tau$. We apply this to $G=\textup{Gal}(L_{n+1}/\mathbb{Q}) \cong \mathbb{Z}/2p^{n+1}\mathbb{Z}$. For brevity, we write $H= \textup{Gal}(L_{n+1}/L_{n})\cong \mathbb{Z}/p\mathbb{Z}$. Further, the composition $i \circ \textup{Tr}$ from \S \ref{subsec_setup} is equal to $N_H:= \sum_{h \in H} h$ viewed as a $K$-endomorphism of $B$.  It follows that the composition $R \hookrightarrow{} B \stackrel{N_H}\to B$ is the zero map, which in turn gives a homomorphism $\Sha(R/L_n) \to \textup{ker}(N_H \ | \ \Sha(E/L_{n+1}))$. We claim this has finite kernel. To see this, we look at the short exact sequence of abelian varieties 
\[ 0 \to R \to B  \xrightarrow{\textup{Tr}} E \to 0.\] Taking Galois cohomology gives an exact sequence
\[ 0 \to E(L_n)/\textup{Norm}_{L_n}^{L_{n+1}}E(L_{n+1}) \to H^{1}(L_n,R) \to  H^{1}(L_n,B).\]
By the Mordell--Weil theorem $E(L_n)/pE(L_n)$ is finite, from which we deduce that $E(L_n)/\textup{Norm}_{L_n}^{L_{n+1}}E(L_{n+1})$ must be finite. Therefore, the induced map $\Sha(R/L_n) \to \textup{ker}(N_H \ | \ \Sha(E/L_{n+1}))$ has finite kernel. For a primitive character $ \tau \in \widehat{G}$, we have $N_H \in I_{\tau}$. Therefore, for such $\tau$, $\textup{ker}(N_H \ | \ \Sha(E/L_{n+1}))\subseteq \Sha(E/L_{n+1})^{(\tau)}$. By Artin formalism,
  \[L(E/L_{n+1},s) = L(E/L_{n},s)\prod_{\substack{\chi \in \widehat{H} \\ \chi \neq \textup{id}}}L(E/L_n, \chi,s).\]  
We let $n$ be as in part (1) and $\chi$ a non-trivial character of $H$. Then, $L(E/L_n, \chi,s)=L(E/\mathbb{Q}, \textup{Ind}_{H}^{G}\chi,1) \neq 0$. Therefore, $L(E/\mathbb{Q}, \tau,1) \neq 0$ for all primitive characters $\tau \in \widehat{G}$. Then, by \cite[Corollary 14.3]{Kato}, the $\tau$-part $(\Sha_{n+1})^{(\tau)}$ is finite for all primitive $\tau \in \widehat{G}$. This completes the proof of (2). For (3), it suffices to show that $R(L_n)$ is finite. This follows in the same way as (2).  \qedhere
\end{proof}

\begin{theorem} \label{size_of_Sha}
Let $p \geq 11$ be a prime. Then, there exists an abelian extension $L/\mathbb{Q}$ and an abelian variety $R/L$ such that $\left | \Sha(R/L) \right | = p \cdot \square.$ 
\end{theorem}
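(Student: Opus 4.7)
The plan is to take $E/\mathbb{Q}$ an elliptic curve of Mordell--Weil rank $1$ with finite $\Sha(E/\mathbb{Q})$ (for instance any curve of analytic rank $1$, via Gross--Zagier and Kolyvagin) and to produce a square-free positive integer $d$ and a level $n$ so that Proposition \ref{Prop:CT_Weil_Restriction}(2), applied with $K = K_n$, $F = K_{n+1}$, $A = E$ and $L = L_n$, delivers $|\Sha_0(R/L_n)[p^\infty]| \equiv p \Mod{\square}$. Since $\Sha(R/L_n)$ will be finite by Theorem \ref{RK}(2) one has $\Sha = \Sha_0$, while Lemma \ref{Lem:size_of_shas} handles every prime $q \neq p$ (including $q=2$, as $A=E$ is an elliptic curve) and Lemma \ref{torsion_quotient_elliptic_curve} trivialises the torsion correction appearing in Proposition \ref{Prop:CT_Weil_Restriction}(2). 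Multiplying over all primes then gives $|\Sha(R/L_n)| = p \cdot \square$, with $L_n/\mathbb{Q}$ abelian as a compositum of $K_n/\mathbb{Q}$ and $\mathbb{Q}(\sqrt{-d})/\mathbb{Q}$.

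I would choose the square-free $d > 0$ to satisfy (i) $L(E_{-d}/\mathbb{Q},1) \neq 0$, which by Kolyvagin makes $E_{-d}/\mathbb{Q}$ of Mordell--Weil rank $0$ with finite $\Sha$, and (ii) every bad prime of $E$ and the prime $p$ split in $\mathbb{Q}(\sqrt{-d})/\mathbb{Q}$, so that all primes of $K_n$ above them---together with the unique prime of $K_n$ ramifying in $K_{n+1}/K_n$, which lies above $p$---split in $L_n/K_n$. Condition (ii) is a congruence condition on $d$ modulo $4 N_E p$, where $N_E$ is the conductor of $E$, and condition (i) holds for a positive density of square-free $d$ in any fixed congruence class by Bump--Friedberg--Hoffstein (or Waldspurger); hence the two are jointly satisfiable. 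I would then enlarge $n$ if needed so that Theorem \ref{RK} applies to $E$ and, by Rohrlich, so that $L(E/\mathbb{Q},\chi,1)$ and $L(E_{-d}/\mathbb{Q},\chi,1)$ are non-zero for every non-trivial character $\chi$ of $\textup{Gal}(K_{n+1}/\mathbb{Q})$.

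With these choices, the isogeny $\textup{Res}_{K_n}^{L_n} E \sim E \times E_{-d}$ over $K_n$ and Shapiro's lemma decompose $\textup{Sel}_{p^\infty}(E/L_n)$ up to finite index as $\textup{Sel}_{p^\infty}(E/K_n) \oplus \textup{Sel}_{p^\infty}(E_{-d}/K_n)$. Kato's theorem applied character-by-character, together with the $L$-value non-vanishing above, gives $\textup{rk}_p(E/K_n) = 1$ and $\textup{rk}_p(E_{-d}/K_n) = 0$, whence $\textup{rk}_p(E/L_n) = 1$; the same argument at level $n+1$ yields $\textup{rk}_p(E/L_{n+1}) = 1$, verifying the rank-stability hypothesis of Proposition \ref{Prop:CT_Weil_Restriction}(2). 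The main obstacle is the simultaneous satisfaction of (i) and (ii), which reduces to a standard density argument combining Bump--Friedberg--Hoffstein with congruence class counting; everything else follows from the formalism already assembled in Sections \ref{sec_sha_mod_squares} and 3.
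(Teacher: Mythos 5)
Your proposal follows the paper's reduction essentially verbatim up to the key identity: finiteness of $\Sha(R/L_n)$ from Theorem \ref{RK}, prime-by-prime control via Lemma \ref{Lem:size_of_shas}, vanishing of the torsion terms via Lemma \ref{torsion_quotient_elliptic_curve}, and Proposition \ref{Prop:CT_Weil_Restriction}(2) to arrive at $\left|\Sha(R/L_n)\right| \equiv p^{\textup{rk}_p(E/L_n)} \Mod \square$. Where you diverge is the final, crucial step of showing the exponent is odd. The paper does this with no hypotheses on $E$ at all: since every bad prime of $E$ splits in the imaginary quadratic field $\mathbb{Q}(\sqrt{-d})$, the global root number $w(E/\mathbb{Q}(\sqrt{-d}))$ receives a non-trivial contribution only from the unique (complex) archimedean place, hence equals $-1$; the $p$-parity theorem of Dokchitser--Dokchitser then makes $\textup{rk}_p(E/\mathbb{Q}(\sqrt{-d}))$ odd, and invariance of Selmer parity under the odd-degree Galois extension $L_n/\mathbb{Q}(\sqrt{-d})$ transports this to $L_n$. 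You instead try to compute the rank exactly, taking $E$ of analytic rank one and $d$ with $L(E_{-d}/\mathbb{Q},1)\neq 0$, which forces you to import a further non-trivial analytic input (non-vanishing of quadratic twists in prescribed congruence classes) that the paper never needs.

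The genuine gap is in your use of Rohrlich. His theorem gives $L(E/\mathbb{Q},\chi,1)\neq 0$ for all but \emph{finitely many} characters $\chi$ of the tower $K_\infty/\mathbb{Q}$; that finite exceptional set is fixed once and for all and may contain non-trivial characters of small conductor. Enlarging $n$ only adds new, non-exceptional characters of $\textup{Gal}(K_{n+1}/\mathbb{Q})$ --- it never removes the exceptional ones --- so you cannot arrange that $L(E/\mathbb{Q},\chi,1)\neq 0$ for \emph{every} non-trivial $\chi$. Since Kato's theorem only bounds ranks when the relevant $L$-value is non-zero, your claimed equalities $\textup{rk}_p(E/K_n)=1$ and $\textup{rk}_p(E_{-d}/K_n)=0$ do not follow; all you get is stabilisation of these ranks. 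The step is repairable: the non-trivial characters of $\textup{Gal}(K_n/\mathbb{Q})\cong\mathbb{Z}/p^n\mathbb{Z}$ fall into Galois orbits of even size $p^{k-1}(p-1)$ with equal Selmer coranks within each orbit, so their total contribution is even and $\textup{rk}_p(E/L_n)\equiv \textup{rk}_p(E/\mathbb{Q})+\textup{rk}_p(E_{-d}/\mathbb{Q})\equiv 1 \Mod 2$ --- but at that point you are running a parity argument anyway, and the paper's root-number route reaches the same conclusion more cheaply and for an arbitrary elliptic curve $E/\mathbb{Q}$.
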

\begin{proof}
We let $E/\mathbb{Q}$ be an elliptic curve defined over $\mathbb{Q}$ and $d$ a positive integer such that $p$ and all bad primes of $E/\mathbb{Q}$ split in $\mathbb{Q}(\sqrt{-d})$.
For the remainder of this proof, we fix $n$ as in Theorem \ref{RK} so that $\textup{rk}_{p}(E/L_{n+1})=\textup{rk}_{p}(E/L_n)$ and $\Sha(R/L_n)$ is finite. In view of Lemma \ref{Lem:size_of_shas}(1)-(2), we deduce that $\left | \Sha(R/L_n) \right | \equiv \left | \Sha_0(R/L_n)[p^{\infty}] \right |$. Combining this with Proposition \ref{Prop:CT_Weil_Restriction}(2) (applied with $K=K_n, F=K_{n+1},L=L_n$) and Lemma \ref{torsion_quotient_elliptic_curve}, we get
\[ \label{cas_tate_ec} \left | \Sha(R/L_{n})\right | \equiv p^{\textup{rk}_{p}(E/L_{n})}    \Mod \square. \tag{$\dagger$}\]
Since $E$ is defined over $\mathbb{Q}$, its global root number $\omega(E/\mathbb{Q}(\sqrt{-d}))=-1$. This is because the only contribution in the root number computation comes from the unique archimedean place of $\mathbb{Q}(\sqrt{-d})$. Since $\textup{Res}_{\mathbb{Q}}^{\mathbb{Q}(\sqrt{-d})} E$ is isogenous to $E \times E_{-d}$, we deduce that
\fontsize{10.5pt}{9pt}
\[ (-1)^{\textup{rk}_{p}(E/\mathbb{Q}(\sqrt{-d}))} = (-1)^{\textup{rk}_{p}(E/\mathbb{Q})} (-1)^{\textup{rk}_{p}(E_{-d}/\mathbb{Q})} \stackrel{\textup{\cite[Thm 1.4]{BSD-mod-squares}}}= w(E/\mathbb{Q})w(E_{-d}/\mathbb{Q}). \]
In view of the isogeny, $ w(E/\mathbb{Q}(\sqrt{-d}))= w(E/\mathbb{Q})w(E_{-d}/\mathbb{Q})$, from which we deduce that $\textup{rk}_{p}(E/\mathbb{Q}(\sqrt{-d}))  \equiv 1 \Mod 2$. Since the parity of $p^{\infty}$-Selmer ranks remains unchanged in odd degree Galois extensions \cite[Corollary 4.15]{BSD-mod-squares}, then $\textup{rk}_{p}(E/L_{n}) \equiv \textup{rk}_{p}(E/\mathbb{Q}(\sqrt{-d})) \equiv 1 \Mod 2$. Combining this with \eqref{cas_tate_ec} gives the required result. \qedhere
\end{proof}

\begin{theorem} \label{Sha-p-square}Let $p$ be a rational prime. Then, there exists an abelian variety $R'/\mathbb{Q}$ such that $\left | \Sha(R'/\mathbb{Q})\right |=p\cdot \square.$
\end{theorem}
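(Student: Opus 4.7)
The plan is to bootstrap from Theorem \ref{size_of_Sha} via a Weil restriction of scalars argument for primes $p\geq 11$, and to invoke existing constructions from the literature for the four remaining primes $p\in\{2,3,5,7\}$.

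For $p\geq 11$, Theorem \ref{size_of_Sha} produces an abelian extension $L/\mathbb{Q}$ and an abelian variety $R/L$ with $|\Sha(R/L)|=p\cdot\square$. I would set $R':=\textup{Res}^{L}_{\mathbb{Q}}R$ and establish $\Sha(R'/\mathbb{Q})\cong\Sha(R/L)$. Globally this is Shapiro's lemma: $H^1(\mathbb{Q},R')\cong H^1(L,R)$. Locally, at each place $v$ of $\mathbb{Q}$, the identification $R'_{\mathbb{Q}_v}\cong\textup{Res}^{L\otimes_{\mathbb{Q}}\mathbb{Q}_v}_{\mathbb{Q}_v}R$ from \cite[Proposition A.5.2]{reductive-groups} together with another application of Shapiro's lemma gives $H^1(\mathbb{Q}_v,R'_{\mathbb{Q}_v})\cong\prod_{w\mid v}H^1(L_w,R)$, and these isomorphisms are compatible with the localisation maps — exactly the style of identification invoked already in Lemma \ref{Lem:size_of_shas} and Proposition \ref{Prop:CT_Weil_Restriction} via \cite[pp.~178(a)]{MR0330174}. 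Taking kernels yields the required isomorphism of Tate--Shafarevich groups, and in particular an equality of orders.

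For the small primes I would simply cite existing literature: the Poonen--Stoll example furnishes an abelian variety $A/\mathbb{Q}$ with $|\Sha(A/\mathbb{Q})|=2\cdot\square$, and Stein's construction \cite{Stein} (valid for all odd primes up to $25000$), or alternatively Keil's \cite{Keil} (for $p\leq 13$), provides examples for each $p\in\{3,5,7\}$. There is no genuine obstacle: both ingredients are essentially off the shelf, and the one point requiring attention — the compatibility of the global and local Shapiro isomorphisms with the restriction maps, so that $\Sha(R'/\mathbb{Q})$ really is identified with $\Sha(R/L)$ and not merely with some isomorphic subquotient of $H^1(L,R)$ — is standard and is precisely the reason Weil restriction interacts cleanly with $\Sha$.
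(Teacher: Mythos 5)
Your proposal matches the paper's proof essentially verbatim: for $p\geq 11$ take $R'=\textup{Res}^{L}_{\mathbb{Q}}R$ and identify $\Sha(R'/\mathbb{Q})\cong\Sha(R/L)$ via Shapiro's lemma combined with the local compatibility of \cite[pp.~178(a)]{MR0330174}, then cite \cite{poonen1999cassels} and \cite{Stein} for $p\leq 7$. The extra care you take in spelling out the compatibility of the global and local Shapiro isomorphisms is a sensible elaboration of what the paper leaves implicit, but it is the same argument.
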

\begin{proof}
For $p\geq 11$, we consider $R' = \textup{Res}^{L}_{\mathbb{Q}}R$ where $R$ is the abelian variety from Theorem \ref{size_of_Sha}. Since $\Sha(R'/\mathbb{Q}) \cong \Sha(R/L)$ (combine Shapiro's lemma with \cite[pp. 178(a)]{MR0330174}), the result follows by Theorem \ref{size_of_Sha}. For $p\leq 7$, see \cite{poonen1999cassels} and \cite{Stein}.
\end{proof}

\begin{corollary} \label{Sha-n-square}
For every positive square-free integer $n$, there exists an abelian variety defined over $\mathbb{Q}$ with finite Tate--Shafarevich group of order $n \cdot \square$.
\end{corollary}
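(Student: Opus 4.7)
The plan is to reduce the corollary to the prime case already handled by Theorem \ref{Sha-p-square} by taking products of abelian varieties. The key observation is that the Tate--Shafarevich group is multiplicative under products: for abelian varieties $A_1/\mathbb{Q}$ and $A_2/\mathbb{Q}$, one has a canonical isomorphism $\Sha(A_1 \times A_2/\mathbb{Q}) \cong \Sha(A_1/\mathbb{Q}) \times \Sha(A_2/\mathbb{Q})$, which follows from the fact that Galois cohomology commutes with finite products and that the localisation maps split accordingly. In particular, if both factors have finite Tate--Shafarevich groups, so does their product, and the orders multiply.

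Given this, I would proceed as follows. Write $n = p_1 p_2 \cdots p_k$ as a product of distinct primes (the case $n=1$ being trivial, e.g.\ any elliptic curve of rank zero with $\Sha$ known to be trivial, such as an elliptic curve of conductor $11$). For each $i = 1, \ldots, k$, Theorem \ref{Sha-p-square} supplies an abelian variety $R'_i/\mathbb{Q}$ with finite Tate--Shafarevich group satisfying $|\Sha(R'_i/\mathbb{Q})| = p_i \cdot m_i^2$ for some positive integer $m_i$. Set
\[ A = R'_1 \times R'_2 \times \cdots \times R'_k. \]
By the multiplicativity recalled above, $\Sha(A/\mathbb{Q})$ is finite and
\[ |\Sha(A/\mathbb{Q})| = \prod_{i=1}^{k} |\Sha(R'_i/\mathbb{Q})| = \prod_{i=1}^{k} p_i \cdot m_i^2 = n \cdot \Bigl(\prod_{i=1}^{k} m_i\Bigr)^{2}, \]
which is of the required form $n \cdot \square$.

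There is essentially no obstacle here: the entire content lies in Theorem \ref{Sha-p-square}, and the present step is purely formal. The only point worth flagging is the verification of the multiplicativity of $\Sha$ under products, but this is standard and follows immediately from the definition of $\Sha$ as the kernel of the restriction map $H^1(\mathbb{Q},A) \to \prod_v H^1(\mathbb{Q}_v,A)$ together with the additivity of $H^1$ on products of group schemes.
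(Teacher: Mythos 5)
Your proposal is correct and matches the paper's own argument exactly: the paper likewise invokes the isomorphism $\Sha(A_1\times A_2)\cong\Sha(A_1)\times\Sha(A_2)$ and reduces to Theorem \ref{Sha-p-square} applied to each prime factor of $n$. Your extra remarks (the explicit $n=1$ case and the justification of multiplicativity of $\Sha$) are fine but not needed.
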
 
\begin{proof}
Given abelian varieties $A_1, A_2$, $\Sha(A_1\times A_2) \cong \Sha(A_1) \times \Sha(A_2)$. The result follows from Theorem \ref{Sha-p-square}.
\end{proof}

\end{document}